\newtheorem{theorem}{Theorem}
\newtheorem{lemma}[theorem]{Lemma}
\title{A characterisation of alternating knot exteriors}
\author{Joshua Howie} 
\date{}
\begin{document}

\maketitle

\begin{abstract}
We give a topological characterisation of alternating knot exteriors based on the presence of special spanning surfaces. This shows that alternating is a topological property of the knot exterior and not just a property of diagrams, answering an old question of Fox. We also give a characterisation of alternating link exteriors which have marked meridians. We then describe a normal surface algorithm which can decide if a knot is prime and alternating given a triangulation of its exterior as input.
\end{abstract}

\renewcommand{\thesubsection}{\arabic{subsection}}
\setcounter{theorem}{0}

\subsection{Introduction}\label{introd}

Let $L$ be a link in $S^3$, and let $N(L)$ be a regular open neighbourhood. Then the link exterior $X\cong S^3\setminus N(L)$ is a compact $3$-manifold with torus boundary components. A planar link diagram $\pi(L)$ is a projection
\[\pi:S^2\times I\rightarrow S^2,\]
where $L$ has been isotoped to lie in some $S^2\times I\subset S^3$, together with crossing information.
A diagram $\pi(L)$ is alternating if the crossings alternate between over- and under-crossings as we traverse the projection of the link. A non-trivial link is alternating if it admits an alternating diagram.
We take the convention that the unknot is not alternating.

A simple Euler characteristic argument shows that if $\pi(L)$ is a planar diagram with $n$ crossings and checkerboard surfaces $\Sigma$ and $\Sigma'$, then
\[\chi(\Sigma)+\chi(\Sigma')+n=2.\]
Furthermore, if $\pi(L)$ is reduced, non-split and alternating, then $\Sigma$ and $\Sigma'$ are both $\pi_1$-essential in $X$ and $2n$ is the difference between the aggregate slopes of $\Sigma$ and $\Sigma'$. If $K$ is a knot, then $2n$ is the difference between their boundary slopes.

Our main result is to prove the converse, where we think of the difference in boundary slopes of $\Sigma$ and $\Sigma'$ as the minimal geometric intersection number of $\partial\Sigma$ and $\partial\Sigma'$ on $\partial X$, which we denote by $i(\partial\Sigma,\partial\Sigma')$.

\newtheorem*{chars3}{Theorem~\ref{chars3}}
\begin{chars3}
Let $K$ be a non-trivial knot in $S^3$ with exterior $X$. Then $K$ is alternating if and only if there exist a pair of connected spanning surfaces $\Sigma$, $\Sigma'$ in $X$ which satisfy
\[\chi(\Sigma)+\chi(\Sigma')+\frac{1}{2}i(\partial\Sigma,\partial\Sigma')=2.\tag{$\star$}\]
\end{chars3}

Notice that all the conditions on one side of this characterisation are topological in nature. Theorem~\ref{chars3} answers an old question, attributed to Ralph Fox, ``What is an alternating knot?'' This question has been interpreted as requesting a non-diagrammatic description of alternating knots~\cite{lickbk}.

By a theorem of Gordon and Luecke~\cite{gorlue}, a knot exterior has a unique meridian so the concept of a spanning surface is well-defined in $X$. For link exteriors, this is not true~\cite{gorlink} and there are $3$-manifolds which are homeomorphic to the exterior of both alternating and non-alternating links. However, $X$ together with a marked meridian on each boundary component does uniquely determine a link in $S^3$. With the addition of an extra condition on intersection numbers, we are able to give a characterisation of alternating link exteriors with marked meridians in Theorem~\ref{chars4}.

For the second half of this article we turn to normal surface theory, and show that given a $3$-manifold with connected torus boundary $X$, it is possible to decide if $X$ is the exterior of a prime and alternating knot in $S^3$. We also show that given a non-alternating planar diagram of a knot $K$, we can decide if $K$ is prime and alternating, and if so, we can produce a prime alternating diagram of either $K$ or its mirror image.

\newtheorem*{algdec}{Theorem~\ref{algdec}}
\begin{algdec}
Let $X$ be the exterior of a knot $K\subset S^3$. Given $X$, there is an algorithm to decide if $K$ is prime and alternating.
\end{algdec}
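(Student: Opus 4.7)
My plan is to use Theorem~\ref{chars3} as the engine and reduce the alternating criterion to a finite search via normal surface theory, preceded by standard decidability results for $S^3$ recognition and primality. Given the input triangulation of $X$, the kernel of $H_1(\partial X)\to H_1(X;\mathbb{Q})$ identifies the longitude and hence a candidate meridian $\mu$; the Rubinstein-Thompson algorithm tests whether the Dehn filling $X(\mu)$ is $S^3$, certifying $X$ as the exterior of a knot $K\subset S^3$ (the unknot case is dispatched by a solid-torus recognition pass, using our convention). For primality, $K$ is composite if and only if $X$ contains a $\pi_1$-essential annulus both of whose boundary components have meridian slope, and this can be tested by a fundamental-normal-surface enumeration in a $0$-efficient triangulation.

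To decide alternating-ness, I exploit that every spanning surface of a knot exterior has boundary slope of the form $p\mu+\lambda$ for some integer $p$, so $(\star)$ becomes $\chi(\Sigma)+\chi(\Sigma')+\tfrac12|p-p'|=2$. A theorem of Hatcher provides a finite set $P\subset\mathbb{Z}$ containing every slope realised by a $\pi_1$-essential surface in $X$, and $P$ can be computed from the vertex normal surfaces of the triangulation. For any pair of distinct slopes $(p,p')\in P\times P$, $(\star)$ forces $\chi(\Sigma)+\chi(\Sigma')=2-\tfrac12|p-p'|$, and since the Euler characteristic of a connected surface with non-empty boundary is at most $1$, both $\chi(\Sigma)$ and $\chi(\Sigma')$ must lie in the finite integer range $[\,1-\tfrac12|p-p'|,\,1\,]$. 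Standard normal surface routines decide, for each triple $(p,\chi_1)$ in this range, whether $X$ admits a connected $\pi_1$-essential spanning surface of slope $p\mu+\lambda$ with $\chi=\chi_1$. The algorithm loops over the finitely many admissible quadruples $(p,p',\chi_1,\chi_2)$ and returns YES precisely when such a pair of witnesses exists.

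Correctness is immediate from Theorem~\ref{chars3}: a YES output yields a witnessing pair, so $K$ is alternating; conversely, if $K$ is alternating the checkerboard surfaces of a reduced alternating diagram supply connected $\pi_1$-essential witnesses, whose slopes lie in $P$ and whose Euler characteristics fall within the scanned ranges. The main technical obstacle is the subroutine that, given $(p,\chi_1)$, decides existence of a connected $\pi_1$-essential spanning surface of those parameters; this will rely on the structure of fundamental normal surfaces in a $0$-efficient triangulation, together with careful control of connectedness and $\pi_1$-essentiality under normal sums. Conjoining the alternating test with the primality test of the first paragraph then yields the stated decision procedure.
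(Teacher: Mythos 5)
There is a genuine gap at the exact point you flag as ``the main technical obstacle'': the subroutine that, given a slope $p$ and an integer $\chi_1$, decides whether $X$ contains a connected $\pi_1$-essential spanning surface with those parameters is not a standard normal surface routine, and constructing a substitute for it is the actual content of the proof. Bounding the slope set via Hatcher finiteness and bounding $\chi$ via $(\star)$ does not reduce the problem to a finite search, because there is no a priori bound on the weight (number of normal disks) of a normal surface with prescribed slope and Euler characteristic, so you have not exhibited a finite list of candidates to test. The paper closes this hole differently: after rejecting satellite knots (Menasco), it works in an atoroidal $X$ and proves that \emph{any} pair of connected normal spanning surfaces satisfying $(\star)$ must consist of fundamental surfaces. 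The argument is that a non-fundamental spanning surface $\Sigma$ decomposes as $\Sigma_1\oplus S_1\oplus\cdots\oplus S_b$ with $\Sigma_1$ a spanning surface of the same slope (Jaco--Sedgwick) and each closed summand satisfying $\chi(S_j)\le -2$ by atoroidality, so $\chi(\Sigma_1)>\chi(\Sigma)$; then $(\Sigma_1,\Sigma')$ makes the left side of $(\star)$ exceed $2$, and Lemma~\ref{spandis} forces a disconnected summand, contradicting connectedness of fundamental surfaces. This reduces the search to the finite, computable set of fundamental spanning surfaces, with no essentiality hypothesis needed (Theorem~\ref{chars3} does not require the witnesses to be $\pi_1$-essential). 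You would need either to reproduce this fundamentality argument or to supply some other finiteness mechanism; without one the algorithm does not terminate with a certified answer.

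A secondary error: the kernel of $H_1(\partial X)\to H_1(X)$ identifies the longitude $\lambda$, but this does not single out a candidate meridian --- every slope $\mu+n\lambda$ meets $\lambda$ once and maps to a generator, so homology leaves infinitely many candidates to feed to Rubinstein--Thompson. The paper instead invokes the Jaco--Sedgwick meridian-finding algorithm, which produces a finite candidate list from vertex normal surfaces. Your primality test via essential meridional annuli is a legitimate alternative to the paper's satellite-torus check, but note that the paper's atoroidality step is not merely about primality: it is also what makes the fundamentality argument above work, so you cannot simply drop it.
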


In Section~\ref{intsurf}, we describe how two spanning surfaces for a link intersect. In Section~\ref{charac}, we prove Theorem~\ref{chars3}, and give the version for links. In Section~\ref{normsurf}, we give some background on normal surface theory and the boundary solution space. In Section~\ref{altalg}, we detail the algorithm which can decide if a knot manifold is the exterior of an alternating knot.

\paragraph*{Acknowledgement.}

The author would like to thank Hyam Rubinstein for many interesting conversations and help with the algorithm.

\subsection{Intersections of Spanning Surfaces}\label{intsurf}

Let $L$ be a link with $m$ components denoted by $L_j$ for $j=1,\ldots,m$. Denote the boundary components of $X$ by $C_j=\partial N(L_j)$ where each $C_j$ is a torus.

A curve $\mu_j\subset C_j$ is a meridian of $X$ if $\mu_j$ bounds an embedded disk in $N(L_j)$ which intersects $L_j$ transversely exactly once. Given $X$, a set of marked meridians is a set of curves $\{\mu_j\}$ with one $\mu_j$ on each $C_j$ such that Dehn filling along each $\mu_j$ produces the $3$-sphere.

We define a preferred longitude $\lambda_j$ of $C_j$ to be the unique non-trivial curve on $C_j$ which meets $\mu_j$ exactly once and bounds an orientable surface $S_j$ in $S^3\setminus N(L_j)$. Note that $S_j$ is not necessarily embedded in $X$ since it may intersect other components of $L$. 

Let $\overline{\Sigma}$ be a compact surface embedded in $S^3$. Then $\overline{\Sigma}$ is a spanning surface for a link $L$ if $\partial\overline{\Sigma}=L$. Let $\Sigma$ be a surface-with-boundary which is properly embedded in $X$. Then $\Sigma$ is a spanning surface for $X$ if each component of $\partial\Sigma$ has minimal geometric intersection number one with the meridian $\mu_j$ of $C_j$. These two notions of spanning surface are related since $\Sigma=X\cap\overline{\Sigma}$ whenever $\overline{\Sigma}$ is in general position with respect to $\partial X$, and $\Sigma$ can be extended to $\overline{\Sigma}$ by attaching a small annulus in each component of $N(L)$.

A pair of spanning surfaces is in general position in $X$ if they intersect in a set of properly embedded arcs and embedded loops. In particular, there are no triple points or branch points, since each spanning surface is properly embedded.

For $j=1,\ldots,m$, let $\{\sigma_j\}$ be the components of $\partial\Sigma$ and let $\{\sigma'_j\}$ be the components of $\partial\Sigma'$.  Fix an orientation on each longitude $\lambda_j$ and define the orientation of each meridian $\mu_j$ so that $(\lambda_j,\mu_j)$ form a right-handed basis for each torus boundary component $C_j$. Then $[\sigma_j]=p_j[\mu_j]+[\lambda_j]$ and $[\sigma'_j]=p'_j[\mu_j]+[\lambda_j]$ where $p_j, p'_j\in\mathbb{Z}$. We define the algebraic intersection number of $\sigma_j$ and $\sigma'_j$ to be 
\[i_a(\sigma_j,\sigma'_j)=p_j-p'_j= -i_a(\sigma'_j,\sigma_j),\]
while the geometric intersection number is 
\[i(\sigma_j,\sigma'_j)=\lvert i_a(\sigma_j,\sigma'_j)\rvert=\lvert p_j-p'_j\rvert.\] 
Define the geometric intersection number of $\partial\Sigma$ and $\partial\Sigma'$ to be
\[i(\partial\Sigma,\partial\Sigma')=\sum\limits_{j=1}^{m}i(\sigma_j,\sigma'_j).\]

This geometric intersection number is measuring the difference in aggregate slopes of the two spanning surfaces, as defined in~\cite{adkind}. For a knot this is the difference in boundary slopes.

If we isotope $\Sigma$ and $\Sigma'$ so that they realise $i(\partial\Sigma,\partial\Sigma')$, then $\sigma_j$ and $\sigma'_j$ form a quadrangulation $Q_j$ of $C_j$, where each quadrangular face has one pair of non-adjacent vertices on its boundary which are identified. There are $i_j=i(\sigma_j,\sigma'_j)$ vertices, $2i_j$ edges, and $i_j$ faces in $Q_j$. We refer to $\bigsqcup_{j=1}^m Q_j$ as a boundary quadrangulation of $\partial X$. When forming a quadrangulation on $C_j$, every intersection of $\sigma_j$ and $\sigma'_j$ has the same sign.

\begin{figure}[htb!]
    \centering
    \includegraphics[scale=0.42, trim = 5mm 5mm 0mm 3mm, clip]{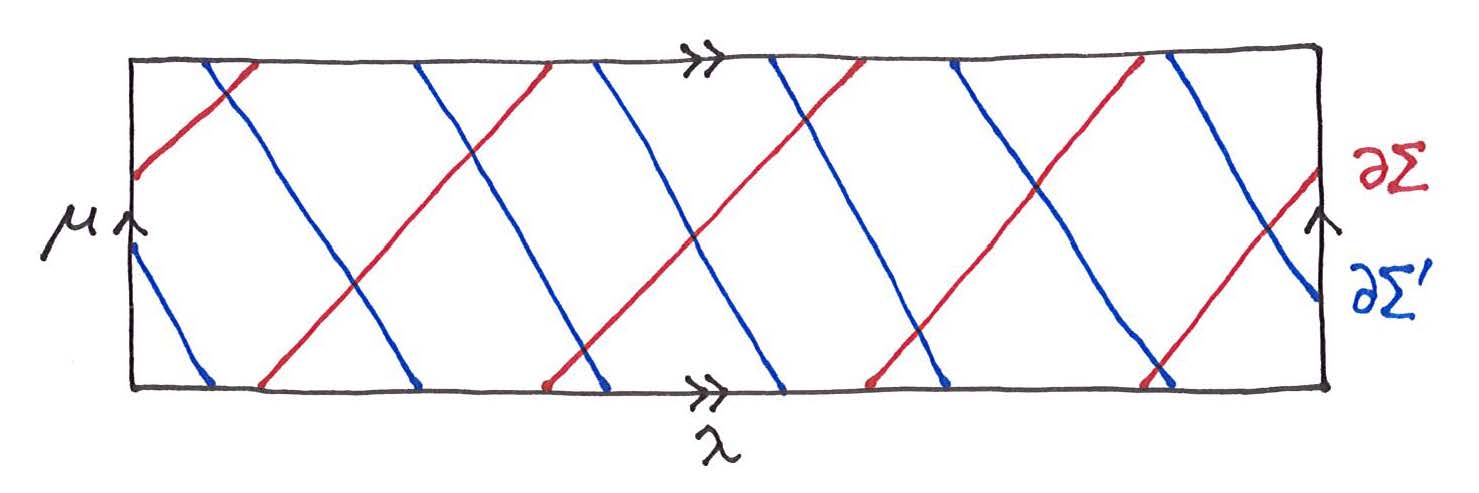}
    \caption{A boundary quadrangulation of $\partial X$ formed by $\partial\Sigma$ and $\partial\Sigma'$ in the case of a knot exterior.}
    \label{bdryquadr}
\end{figure}

An arc of intersection between two spanning surfaces $\Sigma$ and $\Sigma'$ in a link exterior $X$ is called a double arc. Let $\alpha$ be a double arc and let $\overline{\alpha}$ be its extension to $S^3$ such that $\partial\overline{\alpha}\subset L$. There are two types of double arc.

Let $W$ be a regular neighbourhood of $\overline{\alpha}$ in $S^3$. Let $\beta$ and $\beta'$ be the two components of $W\cap L$. We can choose $W$ so that $V=W\cap X$ is a compact handlebody of genus two, and so that $\Sigma\cap V$ and $\Sigma'\cap V$ are both disks. 

Fix an orientation on $\beta$. This induces orientations on the disks $\Sigma\cap V$ and $\Sigma'\cap V$. If these both induce the same orientation on $\beta'$, then $\alpha$ is a parallel arc. If they induce opposite orientations on $\beta'$, then $\alpha$ is a standard arc.

\begin{figure}[htb!]
    \centering
    \includegraphics[scale=0.6, trim = 2mm 5mm 0mm 3mm, clip]{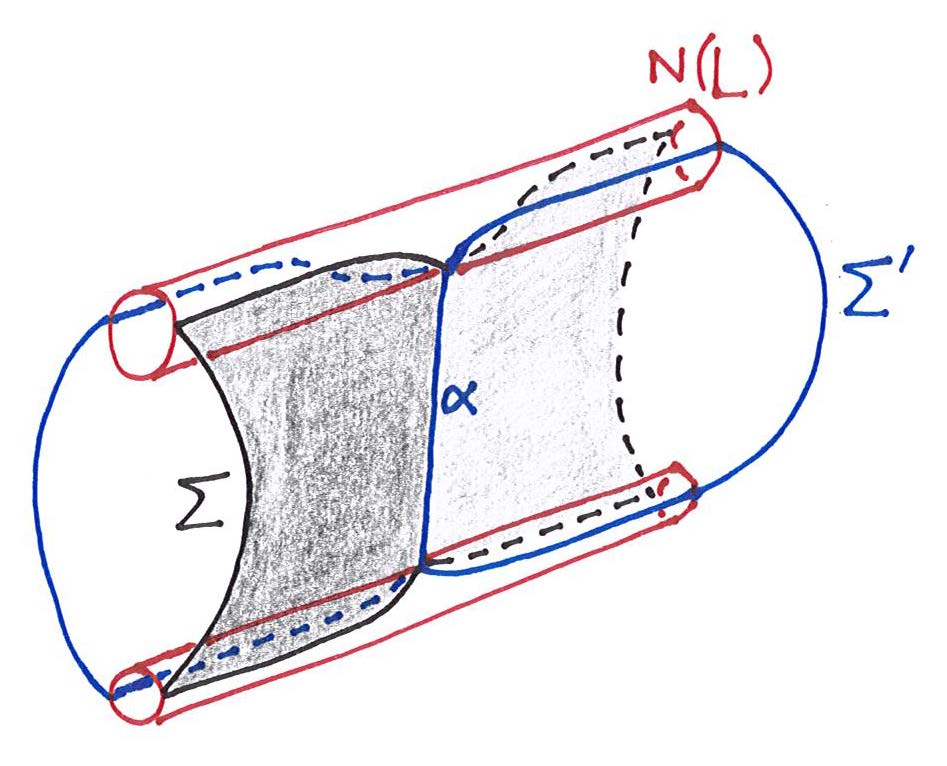}
    \caption[A parallel arc of intersection]{A parallel arc of intersection between two spanning surfaces.}
    \label{paralarch}
\end{figure}

This is equivalent to saying $\partial(\Sigma\cap V)$ and $\partial(\Sigma'\cap V)$ have algebraic intersection number zero if $\alpha$ is a parallel arc, and algebraic intersection number two if $\alpha$ is a standard arc. The intersections on $\partial X$ at the endpoints of a parallel arc $\alpha$ have the same sign if $\alpha$ is standard, but opposite signs when $\alpha$ is parallel.

If we collapse a standard arc to a point, then $(\overline{\Sigma}\cup\overline{\Sigma}')\cap W$ collapses to a disk. If we try to collapse a parallel arc to a point, then $(\overline{\Sigma}\cup\overline{\Sigma}')\cap W$ collapses to an object homeomorphic to a neighbourhood of the apex of a double cone.

\begin{lemma}\label{standarc}
Let $\Sigma$ and $\Sigma'$ be spanning surfaces for a link $L$, isotoped so that their boundaries realise the intersection number $i(\partial\Sigma,\partial\Sigma')$ on $\partial X$. If
\[i(\partial\Sigma,\partial\Sigma')=\lvert\sum\limits_{j=1}^{m}i_a(\sigma_j,\sigma'_j)\rvert,\]
then every double arc of intersection between $\Sigma$ and $\Sigma'$ is standard.
\end{lemma}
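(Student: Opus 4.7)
The plan is to translate the hypothesis into a statement about signs of intersection points on $\partial X$, and then invoke the characterisation of parallel versus standard arcs in terms of those signs. The key ingredients, all stated already in Section~\ref{intsurf}, are: (i) when $\sigma_j$ and $\sigma'_j$ realise the minimal geometric intersection number on the torus $C_j$, every intersection point on $C_j$ has the same sign, so that $i(\sigma_j,\sigma'_j)=\lvert i_a(\sigma_j,\sigma'_j)\rvert$; and (ii) a double arc $\alpha$ is standard precisely when its two endpoints on $\partial X$ have the same sign, and parallel when they have opposite signs.

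First I would use (i) to rewrite
\[
i(\partial\Sigma,\partial\Sigma')=\sum_{j=1}^{m}i(\sigma_j,\sigma'_j)=\sum_{j=1}^{m}\lvert i_a(\sigma_j,\sigma'_j)\rvert.
\]
The hypothesis then reads $\sum_j\lvert i_a(\sigma_j,\sigma'_j)\rvert=\lvert\sum_j i_a(\sigma_j,\sigma'_j)\rvert$, which is the equality case of the triangle inequality. Hence all nonzero $i_a(\sigma_j,\sigma'_j)$ carry a common sign $\varepsilon\in\{+,-\}$; combining with (i), every intersection point of $\partial\Sigma$ with $\partial\Sigma'$ on $\partial X$, across all boundary components, has the same sign $\varepsilon$.

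Now let $\alpha$ be any double arc. Its two endpoints are intersection points of $\partial\Sigma$ and $\partial\Sigma'$ on $\partial X$ (lying on the same torus $C_j$ or on two distinct tori), and by the previous step they share the sign $\varepsilon$. Applying (ii), $\alpha$ must be standard, which completes the argument.

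The proof is essentially a bookkeeping exercise once the two preparatory facts from the section are in hand, so I do not expect a genuine obstacle; the only subtle point to verify is that the sign of an intersection point on $\partial X$ (as used in the parallel/standard dichotomy) agrees with the sign contributing to $i_a(\sigma_j,\sigma'_j)$, but this follows from the way orientations on the meridians were fixed to form a right-handed basis with the longitudes on each $C_j$.
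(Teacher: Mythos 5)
Your argument is correct and is essentially the paper's own proof: both reduce the hypothesis to the equality case of the triangle inequality, conclude that all intersection points of $\partial\Sigma$ with $\partial\Sigma'$ across every boundary component carry a common sign, and then invoke the fact that a parallel arc must join intersections of opposite sign. The only difference is that you spell out the sign-consistency on each $C_j$ and the triangle-inequality step slightly more explicitly than the paper does.
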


\begin{proof}
By definition $i(\partial\Sigma,\partial\Sigma')=\sum\limits_{j=1}^{m} i(\sigma_j,\sigma'_j)$. If
\[\lvert\sum\limits_{j=1}^{m}i_a(\sigma_j,\sigma'_j)\rvert=\sum\limits_{j=1}^{m} i(\sigma_j,\sigma'_j),\]
then for every $j=1,\ldots,m$, either every intersection between $\sigma_j$ and $\sigma'_j$ is positive, or every intersection between $\sigma_j$ and $\sigma'_j$ is negative. A parallel arc only occurs when a double arc connects a positive intersection to a negative intersection.
\end{proof}

Note that Lemma~\ref{standarc} implies that parallel arcs of intersections can only occur when the double arc runs between different components of $\partial X$. Hence if $K$ is a knot, then every arc of intersection between two spanning surfaces realising minimal intersection number must be standard. 

\begin{figure}[htb!]
    \centering
    \includegraphics[scale=1.0, trim = 0mm 7mm 0mm 5mm, clip]{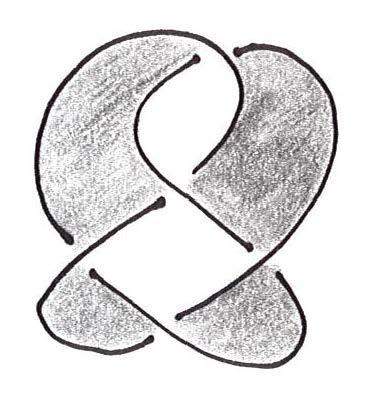}
    \caption{Black and white checkerboard surfaces for a knot}
    \label{cheekysmurf}
\end{figure}

For any non-split planar link diagram $\pi(L)$, there is a standard position for the associated checkerboard surfaces. Away from a crossing the checkerboard surfaces are embedded in $S^2$, but in a small regular neighbourhood of a crossing, we think of the link lying on the surface of a ball $U$. The ball $U$ intersects $S^2$ in an equatorial disk, and the over strand runs over the upper hemisphere, while the under strand runs under the lower hemisphere. Each checkerboard surface intersects $U$ in a half-twisted band. The ball $U$ is called a bubble and this viewpoint of checkerboard surfaces for planar alternating diagrams was introduced by Menasco~\cite{menasc}. 

\begin{figure}[htb!]
    \centering
    \includegraphics[scale=0.5, trim = 5mm 5mm 0mm 3mm, clip]{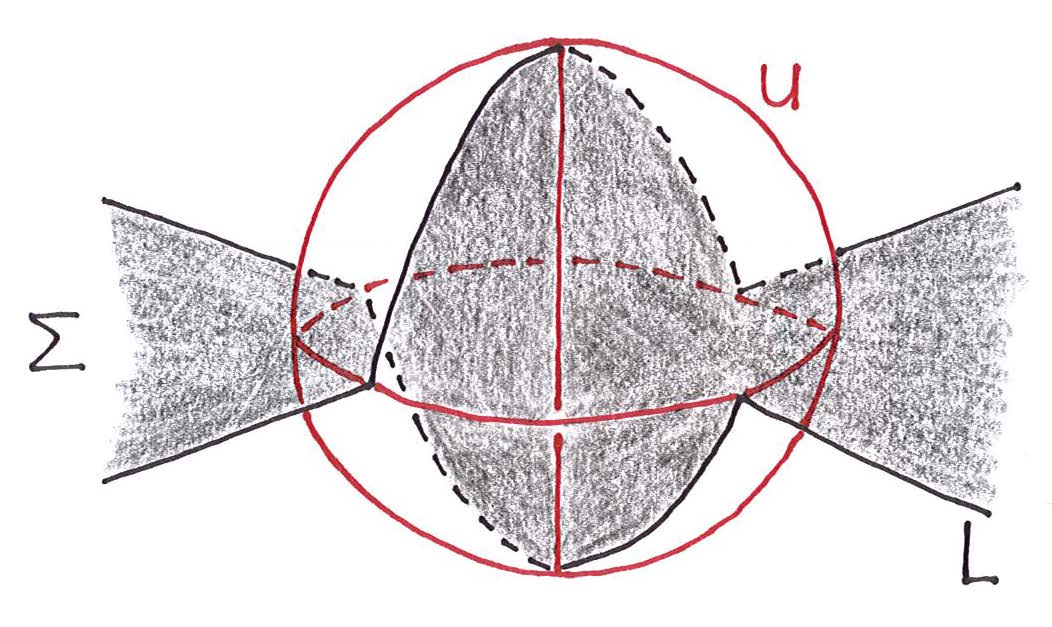}
    \caption{A checkerboard surface in standard position near a bubble $U$.}
    \label{bubtech}
\end{figure}

In standard position, the checkerboard surfaces $\Sigma$ and $\Sigma'$ do not intersect in any loops, and intersect only in double arcs corresponding to the north-south axis of each bubble. These double arcs are all standard. The intersection of the corresponding spanning surfaces $\overline{\Sigma}$ and $\overline{\Sigma}'$ is a disjoint union of trivalent graphs and loops consisting of the link $L$ and the collection of vertical axes of the bubbles. If we assume that $\pi(L)$ is non-split, then every component of $L$ is involved in a crossing of $\pi(L)$, so that $\overline{\Sigma}\cap\overline{\Sigma}'$ forms a graph $\Gamma'$, where each connected component is $3$-regular.

For a non-split alternating planar projection $\pi(L)$ in standard position, it can be seen that as we traverse the image of $L_j$ in $\pi(L)$, $\sigma_j$ rotates in a positive manner say, with respect to $S^2$, while $\sigma'_j$ rotates in a negative manner. Hence the checkerboard surfaces in standard position already realise the minimal geometric intersection number of their boundaries, and their boundaries form a boundary quadrangulation of $\partial X$.

Thus if $\pi(L)$ is a non-split planar alternating diagram of an $m$-component link $L$, which has $n$ crossings, then 
\[2n = i(\partial\Sigma,\partial\Sigma')=\sum\limits_{j=1}^{m}i(\sigma_j,\sigma'_j).\]
A method for calculating the boundary slopes of the checkerboard surfaces associated to a reduced alternating knot diagram is detailed in~\cite{curttay}, where it can be seen that $2n$ is the difference between the boundary slopes of the two checkerboard surfaces. If $\Sigma$ is a spanning surface for a knot and $[\partial\Sigma]=p[\mu]+[\lambda]$, then the boundary slope of $\Sigma$ is $p\in 2\mathbb{Z}$.

Note that if $\pi(L)$ is a non-alternating planar diagram with n crossings, then somewhere there are two consecutive over-crossings, which forces the boundaries of the associated checkerboard surfaces to create a bigon on some component of $C_j$. In this case $n > \frac{1}{2}i(\partial\Sigma,\partial\Sigma')$. 

The Euler characteristics of the checkerboard surfaces arising from a planar projection are related to the Euler characteristic of the projection sphere by the equation
\[\chi(\Sigma)+\chi(\Sigma')+n=\chi(S^2)=2.\] 

A spanning surface $\Sigma$ is $\pi_1$-essential in a knot exterior $X$ if the induced homomorphism 
\[\pi_1(\Sigma)\rightarrow\pi_1(X),\]
and the induced map
\[\pi_1(\Sigma,\partial\Sigma)\rightarrow\pi_1(X,\partial X),\]
are both injective. This implies that $\Sigma$ is both incompressible and boundary-incompressible in $X$.

Aumann~\cite{aumasph} proved that both checkerboard surfaces associated to a planar alternating projection of a knot $K$ are $\pi_1$-essential in $X$. There have been several subsequent proofs and generalisations of this result, such as a theorem of Ozawa~\cite{ozstate}, which is more general than we need, but does include the case of the checkerboard surfaces associated to a planar alternating link diagram which is not necessarily prime.

\subsection{Characterisation}\label{charac}

We now give the proof of the non-diagrammatic characterisation of alternating knot exteriors.

\begin{theorem}\label{chars3}
Let $K$ be a non-trivial knot in $S^3$ with exterior $X$. Then $K$ has an alternating projection onto $S^2$ if and only if there exist a pair of connected spanning surfaces $\Sigma$, $\Sigma'$ for $X$ which satisfy
\[\chi(\Sigma)+\chi(\Sigma')+\frac{1}{2}i(\partial\Sigma,\partial\Sigma')=2.\label{eulereqn}\tag{$\star$}\]
\end{theorem}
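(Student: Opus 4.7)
The plan is to prove each direction of the biconditional. The forward direction is essentially already assembled in Section~\ref{intsurf}: for a reduced, non-split, alternating planar diagram of $K$ with $n$ crossings, the associated checkerboard surfaces $\Sigma,\Sigma'$ are connected spanning surfaces satisfying $\chi(\Sigma)+\chi(\Sigma')+n=\chi(S^2)=2$ and $i(\partial\Sigma,\partial\Sigma')=2n$, which together give $(\star)$. All the substance lies in the reverse direction.

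For the reverse direction, given connected spanning surfaces $\Sigma,\Sigma'$ satisfying $(\star)$, the goal is to reconstruct an alternating projection of $K$ directly from them. First isotope $\Sigma,\Sigma'$ so that their boundaries realise $i(\partial\Sigma,\partial\Sigma')$; then $\partial\Sigma\cup\partial\Sigma'$ is a boundary quadrangulation of $\partial X$ with $2n$ vertices, $4n$ edges, and $2n$ faces, where $2n=i(\partial\Sigma,\partial\Sigma')$. Since $K$ is a knot, the hypothesis of Lemma~\ref{standarc} is immediate, and every double arc of $\Sigma\cap\Sigma'$ is standard. The crucial next step is to eliminate loops of intersection: treating $(\star)$ as a tightness condition, I would take an innermost loop $\gamma\subset\Sigma\cap\Sigma'$ bounding a disk $D\subset\Sigma$, and argue that either $\gamma$ also bounds a disk $D'\subset\Sigma'$ — whence $D\cup D'$ is a sphere in $X$ bounding a ball by irreducibility of the exterior of a non-trivial knot, so $\gamma$ can be isotoped away — or compression of $\Sigma'$ along $D$ yields a spanning surface of strictly greater Euler characteristic, contradicting the tightness of $(\star)$. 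The outcome is that $\Sigma\cap\Sigma'$ consists of exactly $n$ standard arcs, whose $2n$ endpoints coincide with the vertices of the boundary quadrangulation.

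With the intersection structure under control, extend $\Sigma,\Sigma'$ to $\overline{\Sigma},\overline{\Sigma}'\subset S^3$. A direct computation using $(\star)$ gives $\chi(\overline{\Sigma}\cup\overline{\Sigma}')=\chi(\Sigma)+\chi(\Sigma')+n=2$. Take disjoint $3$-ball neighbourhoods $B_1,\dots,B_n$ (bubbles) around the extensions of the standard arcs, modelled so that each $B_i$ meets $\overline{\Sigma}$ and $\overline{\Sigma}'$ in the standard half-twisted-band configuration with axis $\overline{\alpha}_i$. Outside the bubbles, $\overline{\Sigma}$ and $\overline{\Sigma}'$ are disjoint; together with the $n$ equatorial disks of the bubbles they assemble into a closed embedded surface of Euler characteristic $2$, which is the projection sphere $S^2\subset S^3$. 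The knot $K$ meets $S^2$ only inside the bubbles, producing a planar projection $\pi(K)$ with $n$ crossings, and the absence of bigons on $\partial X$ built into the boundary quadrangulation, together with the standard-arc condition at each bubble, forces $\pi(K)$ to be alternating. The hardest step in the plan will be the loop-removal argument, where one must leverage both the rigidity of $(\star)$ and the irreducibility of $X$ to control every kind of intersection loop while making the tightness claim precise.
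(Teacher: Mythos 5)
Your overall skeleton matches the paper's: forward direction from the checkerboard discussion, and for the converse, realise the minimal boundary intersection number to fix a boundary quadrangulation, note that every double arc is standard (for a knot this is automatic from Lemma~\ref{standarc}), remove the loops of intersection, and collapse the arcs to produce a projection sphere carrying an alternating diagram. The gap is exactly where you predicted it would be hardest: the loop-removal step as you describe it does not work. First, you begin by ``taking an innermost loop $\gamma$ bounding a disk $D\subset\Sigma$,'' but nothing guarantees that any loop of $\Sigma\cap\Sigma'$ bounds a disk in either surface: $\Sigma$ and $\Sigma'$ are not assumed incompressible and can have large genus (their Euler characteristics are typically very negative), so a loop of intersection could be essential in both. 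The paper's key idea, which your sketch is missing, is to collapse the standard arcs \emph{first}, use~$(\star)$ to compute that the resulting immersed surface is a $2$-sphere $S_0$, and then run Nowik's count $\sum_{h\geq 1}(2-h)z_h=2$ on the planar regions of $S_0$ cut along the preimages of the double loops; this forces at least two disk regions and hence produces a loop bounding a disk in $\Sigma$ or $\Sigma'$ disjoint from the arc system. Without that, your induction cannot start.

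Second, your dichotomy's ``contradiction'' branch is not a contradiction. Compressing $\Sigma'$ along $D$ raises $\chi$ by $2$ and makes the sum in~$(\star)$ exceed $2$, but equation~$(\star)$ is an equality for the given pair, not a maximality statement; by Lemma~\ref{spandis} the only consequence of exceeding $2$ is that one of the surfaces becomes disconnected, and indeed in the paper's proof the surgeries \emph{do} disconnect the surfaces (that is why $S_j$ becomes $j+1$ spheres). The correct handling is the paper's bookkeeping: perform the surgery regardless, track the single component containing the connected graph $\Gamma'$ (which guarantees at each stage that $z_1\geq 2$, so the next surgery disk exists), and discard the $k$ closed sphere components at the end, taking $\Sigma$ and $\Sigma'$ to be the components constituting the piece containing $\Gamma'$. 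Your first branch (the sphere $D\cup D'$ bounding a ball) also needs care, since $D$ and $D'$ may meet other intersection curves or the arc system, but that is repairable; the missing existence of the disk $D$ and the false tightness contradiction are the substantive problems.
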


\begin{proof}
One direction follows from the discussion in Section~\ref{intsurf}. For the converse, let $\Sigma_0$ and $\Sigma_0'$ be a pair of connected spanning surfaces for $X$ which satisfy~\eqref{eulereqn}.

Since $X$ is not a solid torus, $X$ is boundary-irreducible, so $K$ does not bound a disk in $S^3$. Hence $\chi(\Sigma_0)+\chi(\Sigma'_0)\leq 0$, so by~\eqref{eulereqn}, $i(\partial\Sigma_0,\partial\Sigma'_0)\not=0$.

Isotope $\Sigma_0$ and $\Sigma'_0$ in $X$ so that their boundaries realise the minimal geometric intersection number $i(\partial\Sigma_0,\partial\Sigma'_0)$. Hence $\partial\Sigma_0$ and $\partial\Sigma'_0$ form a boundary quadrangulation $Q$, and $Q$ will remain fixed throughout the proof. We may assume that $\Sigma_0$ and $\Sigma'_0$ are in general position, so that they intersect in a set of proper arcs $\mathcal{A}$ and a set of embedded loops $\mathcal{L}_0$. 

Recall that $\overline{\Sigma}_0$ is the extension of $\Sigma_0$ to $S^3$ so that $\partial\overline{\Sigma}_0=L$. Assume that the interiors of $\overline{\Sigma}_0$ and $\overline{\Sigma}'_0$ are in general position, and no loops of intersection have been introduced by the extension process. Let $F'_0=\overline{\Sigma}_0\cup\overline{\Sigma}'_0$ and let $\Gamma'=(\overline{\Sigma}_0\cap\overline{\Sigma}'_0)\setminus\mathcal{L}_0$, both of which are connected. Let $\overline{\mathcal{A}}$ be the extension of $\mathcal{A}$ to $S^3$, or in other words, let $\overline{\mathcal{A}}$ be the closure of $\Gamma'\setminus L$.

If we collapse each component of $\overline{\mathcal{A}}$ to a point, then $F'_0$ collapses to an immersed surface $F_0$ because $X$ is a knot exterior and every arc of $\mathcal{A}$ is standard by Lemma~\ref{standarc}. Cutting $F'_0$, $\overline{\Sigma}_0$, and $\overline{\Sigma}'_0$ along $\Gamma'$ allows us to calculate that
\[\chi(F'_0) = \chi(\overline{\Sigma}_0)+\chi(\overline{\Sigma}'_0)+\frac{1}{2}i(\partial\Sigma_0,\partial\Sigma'_0),\]
from which the equation~\eqref{eulereqn} tells us that $\chi(F'_0)=2$. The $3$-regular graph $\Gamma'$ collapses to a $4$-regular graph $\Gamma$. Let \[f_0:S_0\looparrowright S^3\] be the immersion of a surface $S_0$ such that $f_0(S_0)=F_0$. There are no triple points of self-intersection in $F_0$ since $\Sigma_0$ and $\Sigma'_0$ are embedded, and the only double loops of self-intersection are precisely the set $\mathcal{L}_0$. It follows that 
\[\chi(S_0)=\chi(F_0)=\chi(F'_0)=2,\]
which implies that $S_0$ is a $2$-sphere.

Suppose $\mathcal{L}_0\not=\emptyset$ and let $\mathcal{B}_0$ be the collection of loops $f_0^{-1}(\mathcal{L}_0)$ on $S_0$. Since $\mathcal{B}_0$ is the pre-image of double loops, we know that $\mathcal{B}_0$ conatins an even number of elements. Because $S_0$ is a $2$-sphere, each loop $\beta\in\mathcal{B}_0$ is separating, and $\mathcal{B}_0$ cuts $S_0$ into a collection of planar surfaces with boundary. Let $z_h$ be the number of planar surfaces in $S_0\setminus\mathcal{B}_0$ which have $h$ boundary components. Exactly one component of $S_0\setminus\mathcal{B}_0$ contains the connected graph $f_0^{-1}(\Gamma)$.

Using an Euler characteristic argument, Nowik~\cite{nowik} points out that 
\[\sum_{h\geq 1}(2-h)z_h=2,\]
which in particular implies that $z_1$, the number of disk regions in $S_0\setminus\mathcal{B}_0$, is at least $2$. Thus there is at least one loop in $\mathcal{L}_0$ which bounds a disk in $F_0$.

Let $\ell_0$ be a loop in $\mathcal{L}_0$ which bounds a disk $D_0$ in either $\Sigma_0$ or $\Sigma'_0$. Without loss of generality assume $D_0\subset\Sigma_0$. Let $\{\beta_0,\beta'_0\}=f_0^{-1}(\ell_0)$ where $f_0(N(\beta_0))\subset\Sigma_0$ and $f_0(N(\beta'_0))\subset\Sigma'_0$. Notice that $f_0^{-1}$ restricted to the interior of $D_0$ is a homeomorphism onto the interior of a disk in $S_0\setminus\mathcal{B}_0$.

Let $A_0=\ell_0\times(-1,1)$ be a regular neighbourhood of $\ell_0$ in $\Sigma'_0$. We perform surgery on $\Sigma'_0$ along $D_0$, by removing the annulus $A_0$ from $\Sigma'_0$ and gluing in the two disks $D_0\times\{-1\}$ and $D_0\times\{1\}$.

\begin{figure}[htb!]
    \centering
    \includegraphics[scale=0.47, trim = 0mm 5mm 0mm 3mm, clip]{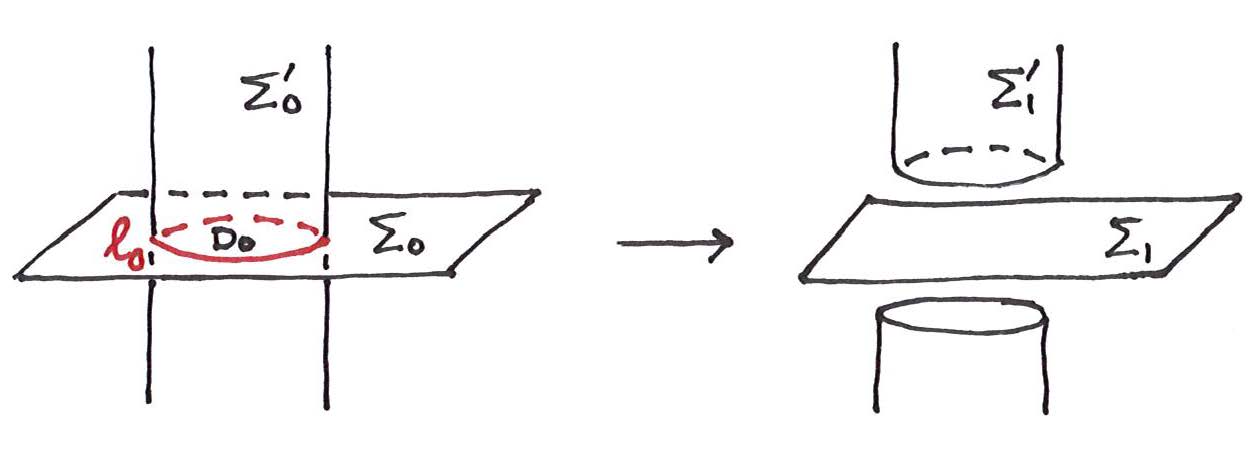}
    \caption[Surgery along a disk]{Surgery on $\Sigma'_0$ along the disk $D_0\subset\Sigma_0$.}
    \label{disksurg}
\end{figure}

Let $(\Sigma_1,\Sigma'_1)$ be the result of performing surgery along $D_0$ on $(\Sigma_0,\Sigma'_0)$. Define $S_1$ to be the result of surgery along $f_0^{-1}(D_0)$ in $S_0$ which removes the loop $\beta'_0$, and deleting the curve $\beta_0$. 

Suppose $\lvert\mathcal{L}_0\rvert=k\geq0$. For $0\leq j\leq k-1$, inductively define $(\Sigma_{j+1},\Sigma'_{j+1})$ to be the result of performing surgery along the disk $D_j$, where $\partial D_j=\ell_j\subset\mathcal{L}_j$ and $D_j$ is a sub disk of either $\Sigma_j$ or $\Sigma'_j$. Let $\mathcal{L}_{j+1}=\mathcal{L}_j\setminus\ell_j$ so that $\mathcal{L}_{j+1}$ is the set of loops of intersection of $\Sigma_{j+1}$ and $\Sigma'_{j+1}$. Let $F'_{j+1}$ be the result of the corresponding surgery on the $2$-complex $F'_j$. Define $S_{j+1}$ to be surgery along $f_j^{-1}(D_j)$ in $S_j$ and deleting the curve $\beta'_j$ or $\beta_j$. 

A similar calculation to Nowik's shows that
\[\sum_{h\geq 0}(2-h)z_h=\chi(S_j)=2+2j,\]
where $S_j$ is a collection of $(j+1)$ closed $2$-spheres since $\beta_j$ and $\beta'_j$ are separating in $S_j$. Since $\Gamma$ is connected and disjoint from $\mathcal{L}_0$, $f_j^{-1}(\Gamma)$ is contained in exactly one component of $S_j$. Hence $z_0\leq j$, so that $z_1\geq 2$, and therefore the disk $D_j$ exists.

Continue this inductive surgery process until we have constructed $\Sigma_k$ and $\Sigma'_k$. At this stage $\mathcal{L}_k$ is empty, and $S_k$ consists of $(k+1)$ $2$-spheres. Hence $F'_k$ consists of $k$ unmarked embedded 2-spheres and one $2$-complex, denoted $F'$, which contains $\Gamma'$.

Define $\overline{\Sigma}$ and $\overline{\Sigma}'$ to be the components of $\overline{\Sigma}_k$ and $\overline{\Sigma}'_k$ respectively which constitute $F'$. Then $\Sigma$ and $\Sigma'$ are connected spanning surfaces for $X$ which satisfy~\eqref{eulereqn}, and whose intersection is exactly $\mathcal{A}$. Collapsing the arcs of $\overline{\mathcal{A}}$ to points, collapses $F'$ and $\Gamma'$ to $F$ and $\Gamma$ respectively. Since $\chi(F')=2$, it follows that $\chi(F)=2$ so $F$ is an embedded $2$-sphere, which will be our desired projection surface.

Since $\partial\Sigma$ and $\partial\Sigma'$ realise $i(\partial\Sigma,\partial\Sigma')=i(\partial\Sigma_0,\partial\Sigma'_0)$, we can recover the crossing information of $\pi(K)$ from $\Gamma\subset F$. This is because every double arc is standard, so instead of collapsing every arc of $\overline{\mathcal{A}}$ to a point, we could collapse a regular neighbourhood of each $\overline{\alpha}$ to a bubble. The diagram $\pi(K)$ must be alternating since otherwise, there would be a bigon between $\partial\Sigma$ and $\partial\Sigma'$ on $\partial X$, which contradicts that the boundary quadrangulation has remained fixed. Note that $\pi(K)$ is not necessarily reduced.
\end{proof}

If $\pi(K)$ is a non-alternating planar diagram with checkerboard surfaces $\Sigma$ and $\Sigma'$, then there is a bigon on $\partial X$ between $\partial\Sigma$ and $\partial\Sigma'$. Any attempt to isotope the checkerboard surfaces to remove all bigons and obtain a boundary quadrangulation causes $\Sigma$ and $\Sigma'$ to create an alternating diagram onto a non-planar surface $F$. This surface $F$ could be either embedded or immersed, and in the latter case may even be non-orientable~\cite{joshphd}. 

Note that Theorem~\ref{chars3} is not concerned with primeness. However a knot is prime if there are no essential annuli properly embedded in $X$ at meridional slope. If a non-trivial knot is prime and alternating, then $X$ is atoroidal, since no prime alternating knot is satellite~\cite{menasc}.

In~\cite{joshphd} using a different method, the author also proved a variation of Theorem~\ref{chars3} which required both spanning surfaces to be $\pi_1$-essential in $X$. We also note that a more complicated characterisation can be obtained as a special case of a theorem proved in~\cite{joshphd} which gives a topological characterisation of a class of links which have certain alternating diagrams onto orientable surfaces of higher genus. This will be written up in a forthcoming article with Rubinstein~\cite{joshhyam}.

We have not stated any of the theorems in this section for links. The issue is that given two spanning surfaces, there could be parallel arcs between different components of $\partial X$. If the two spanning surfaces have been isotoped to create a boundary quadrangulation and there are parallel arcs of intersection, then the complex $F'_0$ does not collapse to a surface. We note that there exists an example~\cite{joshphd} of a pair of spanning surfaces for a non-split $2$-component link which intersect in parallel arcs, yet still satisfy equation~\eqref{eulereqn}.

However, if we assume that all arcs of intersection are standard, then we have the following theorem.

\begin{theorem}\label{chars4}
Let $L$ be a non-trivial non-split link in $S^3$ with exterior $X$ which has a marked meridian on each boundary component. Then $L$ has an alternating projection onto $S^2$ if and only if there exist a pair of connected spanning surfaces $\Sigma$, $\Sigma'$ for $X$ which satisfy
\[\chi(\Sigma)+\chi(\Sigma')+\frac{1}{2}i(\partial\Sigma,\partial\Sigma')=2,\tag{$\star$}\]
and 
\[i(\partial\Sigma,\partial\Sigma')=\lvert\sum_{j=1}^{m}i_a(\sigma_j,\sigma'_j)\rvert.\]
\end{theorem}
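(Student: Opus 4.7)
The plan is to adapt Howie's proof of Theorem~\ref{chars3} almost verbatim, with the additional hypothesis $i(\partial\Sigma,\partial\Sigma')=\lvert\sum_j i_a(\sigma_j,\sigma'_j)\rvert$ taking over the role that the single-boundary-component nature of a knot exterior played when invoking Lemma~\ref{standarc}.

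The forward direction I would extract from Section~\ref{intsurf}. For a non-split alternating diagram $\pi(L)$ with $n$ crossings, the checkerboard surfaces $\Sigma,\Sigma'$ in standard position satisfy $\chi(\Sigma)+\chi(\Sigma')+n=2$ and $i(\partial\Sigma,\partial\Sigma')=2n$, giving $(\star)$. For the extra condition I would use the observation in Section~\ref{intsurf} that on every component $L_j$ the curve $\sigma_j$ rotates positively about $S^2$ while $\sigma'_j$ rotates negatively, so every intersection of $\sigma_j\cap\sigma'_j$ on $C_j$ carries the same sign and that sign is consistent across different components. Summing absolute values then matches the absolute value of the total algebraic intersection.

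For the converse, the marked meridians allow us to extend $\Sigma_0,\Sigma'_0$ unambiguously across $N(L)$ to $\overline{\Sigma}_0,\overline{\Sigma}'_0\subset S^3$, so we may form the $2$-complex $F'_0=\overline{\Sigma}_0\cup\overline{\Sigma}'_0$ and the $3$-regular graph $\Gamma'$ exactly as in the knot case. The key point is that our new hypothesis is precisely the hypothesis of Lemma~\ref{standarc}, hence every double arc of $\mathcal{A}$ is standard even when it runs between distinct components of $\partial X$. Thus collapsing $\overline{\mathcal{A}}$ to a finite set of points produces an immersed surface without the conical singularities that parallel arcs would introduce, the Euler characteristic computation still gives $\chi(F'_0)=2$, and the underlying surface of the immersion $f_0$ is a $2$-sphere. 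From here the inductive surgery along disk regions in $S_j\setminus\mathcal{B}_j$, the Nowik counting argument bounding $z_1\geq 2$, and the final bubble reconstruction all carry over word for word from Theorem~\ref{chars3}, producing an alternating projection of $L$ onto $F\cong S^2$; the preserved boundary quadrangulation rules out bigons and forces the resulting diagram to alternate.

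The principal obstacle is precisely the pathology the extra hypothesis is designed to avoid: the excerpt itself points out the existence of a pair of spanning surfaces for a non-split $2$-component link which intersect in parallel arcs and satisfy $(\star)$ yet for which $F'_0$ does not collapse to a surface, so without condition two the argument would fail outright. A secondary bookkeeping point I would want to nail down is connectedness of $\Gamma'$ in the link setting: $\Gamma'$ is the union of $L$ together with the arcs $\overline{\mathcal{A}}$, and I would verify its connectedness from non-splitness of $L$ and the fact that each connected spanning surface $\Sigma_0,\Sigma'_0$ meets every component of $L$ (so the arcs of $\overline{\mathcal{A}}$ link together the components of $L$ through the intersection pattern enforced by the boundary quadrangulation). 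Once these two points are in place, the Nowik-style surgery and the recovery of the alternating projection of $L$ run through mutatis mutandis.
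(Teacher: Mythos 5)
Your proposal matches the paper's proof in both directions: the forward implication is extracted from the checkerboard-surface discussion in Section~\ref{intsurf} (with the sign-consistency of the boundary intersections yielding the second equation), and the converse uses the marked meridians to make spanning surfaces well-defined and the new hypothesis to invoke Lemma~\ref{standarc} so that every double arc is standard, after which the argument of Theorem~\ref{chars3} runs unchanged. Your extra remark about verifying connectedness of $\Gamma'$ via non-splitness is a reasonable point of care that the paper leaves implicit, but it does not change the approach.
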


\begin{proof}
Let $\pi(L)$ be a reduced non-split alternating projection of $L$ onto $S^2$, and let $\Sigma$ and $\Sigma'$ be the associated checkerboard surfaces in standard position. Then $\pi(L)$, $\Sigma$, and $\Sigma'$ are connected, so every arc of intersection between $\Sigma$ and $\Sigma'$ is standard. This means every algebraic intersection number $i_a(\sigma_j,\sigma'_j)$ is either positive or every $i_a(\sigma_j,\sigma'_j)$ is negative, which implies the second equation. The first equation follows from Section~\ref{intsurf}.

For the converse, the concept of a spanning surface for $X$ is well-defined since a set of meridians for $X$ are specified. Lemma~\ref{standarc} ensures that every arc of intersection is standard. Then the rest of the proof goes through as in Theorem~\ref{chars3}. Since $\Sigma$, and $\Sigma'$ are connected, $\pi(L)$ must be non-split.
\end{proof}

\subsection{Normal Surface Theory}\label{normsurf}

In this section we provide the background material necessary to construct our algorithm. Kneser introduced the concept of a normal surface, before Haken~\cite{haknf} developed normal surface theory into an important tool for algorithmic topology. We will give a brief outline of the theory, for full details the reader is referred to~\cite{jacotol}.

A knot manifold is a compact irreducible $3$-manifold with connected torus boundary. A triangulation $\mathcal{T}$ of a knot manifold $M$ is a collection of $t$ tetrahedra and a set of equations which identify some pairs of faces of the tetrahedra, so that the link of every vertex is either a $2$-sphere or a disk, and the unglued faces form the boundary torus $\partial M$. 

A normal surface $S$ is a properly embedded surface in $M$ which is tranverse to the 2-skeleton of $\mathcal{T}$, and such that $S\cap\triangle$ is a collection of triangular or quadrilateral disks, where $\triangle$ is any tetrahedron of $\mathcal{T}$, and each disk intersects each edge of $\triangle$ in at most one point. There are seven normal isotopy classes of normal disks, four are triangular and three are quadrilateral, and each is these is known as a disk type. 

If we fix an ordering of the disk types $d_1, d_2,..., d_{7t}$, then a normal surface $S$ can be represented uniquely up to normal isotopy by a $7t$-tuple of non-negative integers $\mathbf{n}(S)= (x_1, x_2,..., x_{7t})$, where $x_i$ is the number of disks of type $d_i$, and $t$ is the number of tetrahedra in $\mathcal{T}$.  

Conversely, given a $7t$-tuple of non-negative integers $\mathbf{n}$, we can impose restrictions on the $x_i$ so that $\mathbf{n}$ represents a properly embedded normal surface. We require that at least two of the three quadrilateral disk types are not present in each tetrahedra. This ensures that the surface is embedded. We also need to make sure that the disk types match up with the disk types in neighbouring tetrahedra.

An arc type is the normal isotopy class of the intersection of a normal surface with a face of a tetrahedron. There are three arc types in each face of each tetrahedron, and each arc type is contributed to by two different disk types, one triangular, the other quadrilateral. We require that the number of each arc type in each face agrees with the number of arcs of the corresponding type in the face of the tetrahedron which is glued to it. This condition can be described by a linear equation for each arc type. Together they are called the matching equations for the normal surface $S$, and in a one-vertex triangulation of a knot manifold $M$, there are $6t-3$ matching equations.

The set of non-negative integer solutions to the normal surface equations lie within an infinite linear cone $\mathcal{S}_{\mathcal{T}}\subset\mathbb{R}^{7t}$. The linear cone $\mathcal{S}_{\mathcal{T}}$ is called the solution space.

The additional condition that 
\[\sum_{i=1}^{7t} x_i = 1,\] 
turns the solution space into a compact, convex, linear cell $\mathcal{P}_{\mathcal{T}}\subset\mathcal{S}_{\mathcal{T}}$. We call $\mathcal{P}_{\mathcal{T}}$  the projective solution space, and we let $\hat{\mathbf{n}}(S)$ represent the projective class of the normal surface $S$. The carrier of a normal surface $S$, denoted $\mathcal{C}_{\mathcal{T}}(S)$, is defined to be the unique minimal face of $\mathcal{P}_{\mathcal{T}}$ which contains $\hat{\mathbf{n}}(S)$.

Let $S$ be a properly embedded surface in a 3-manifold $M$ with triangulation $\mathcal{T}$. Haken~\cite{haknf} showed that after a series of isotopies, compressions, boundary-compressions and the removal of trivial $2$-spheres and disks, $S'$ can be represented as the union of properly embedded normal surfaces with respect to $\mathcal{T}$. In particular, if $S$ is $\pi_1$-essential in $M$, then $S$ can be isotoped to be normal with respect to $\mathcal{T}$.

Two normal surfaces $S$ and $S'$ are compatible if for each tetrahedron $\triangle$ of $\mathcal{T}$, $S$ and $S'$ do not contain quadrilateral disks of different types. If $S$ and $S'$ are compatible, then we can form the Haken sum of $S$ and $S'$, which we denote $S\oplus S'$. The Haken sum is a geometric sum along each arc and loop of intersection between $S$ and $S'$, which is uniquely defined by the requirement that $S\oplus S'$ is a normal surface. Any other choice of geometric sums would produce a surface with folds. If $\mathbf{n}(S)=(x_1, x_2,\ldots, x_{7t})$ and $\mathbf{n}(S')=(x'_1, x'_2,\ldots, x'_{7t})$ are representatives of compatible normal surfaces $S$ and $S'$ in a triangulation $\mathcal{T}$ of a $3$-manifold $M$, then $\mathbf{n}(S\oplus S')=\mathbf{n}(S)+\mathbf{n}(S') = (x_1+x'_1, x_2+x'_2,\ldots, x_{7t}+x'_{7t})$. Also, $\chi(S\oplus S')=\chi(S)+\chi(S')$.

A normal surface $S$ is called a vertex surface if $\hat{\mathbf{n}}(S)$ lies at a vertex of the projective solution space. This means that whenever some multiple of $S$ can be written as a Haken sum of two surfaces, then both the summands are also multiples of $S$.

A normal surface $S$ is called a fundamental surface if $\mathbf{n}(S)$ cannot be written as the sum of two solutions to the normal surface equations. Every vertex surface is a fundamental surface, but there exist fundamental surfaces which are not vertex surfaces.

All normal surfaces can be written as a finite sum of fundamental surfaces. There are a finite number of fundamental surfaces. They can be found algorithmically and Haken used this fact to construct his algorithms. Many of these algorithms have been subsequently improved so that they utilise vertex solutions rather than fundamental solutions, which makes the algorithms more efficient.

In particular, Jaco and Tollefson~\cite{jacotol} were able to show that if a knot-manifold contains a properly embedded normal surface $S$ with $\chi(S)=0$, then there exists such a surface at a vertex of $\mathcal{P}_{\mathcal{T}}$.

A triangulation is $0$-efficient if the only normal disks or normal spheres are vertex-linking. Jaco and Rubinstein~\cite{0efftri} showed that every compact orientable irreducible and boundary-irreducible $3$-manifold with non-empty boundary admits a $0$-efficient triangulation. Since the solid torus admits a one-vertex triangulation, it then follows that every knot exterior admits a one-vertex triangulation.

Let $\mathcal{T}$ be a one-vertex triangulation of a knot manifold $M$. Then there is an induced one-vertex triangulation of $\mathcal{T}_{\partial}$ of $\partial M$. The boundary triangulation $\mathcal{T}_{\partial}$ consists of one vertex, three edges, and two faces. There are six normal arc types, however a normal curve can is determined by just three of these arc types. Every curve on $\partial M$ has a unique normal representative. This means that isotopy classes of curves on $\partial M$ correspond to normal isotopy classes of curves on $\mathcal{T}_{\partial}$.

\begin{figure}[htb!]
    \centering
    \includegraphics[scale=0.6, trim = 0mm 12mm 0mm 7mm, clip]{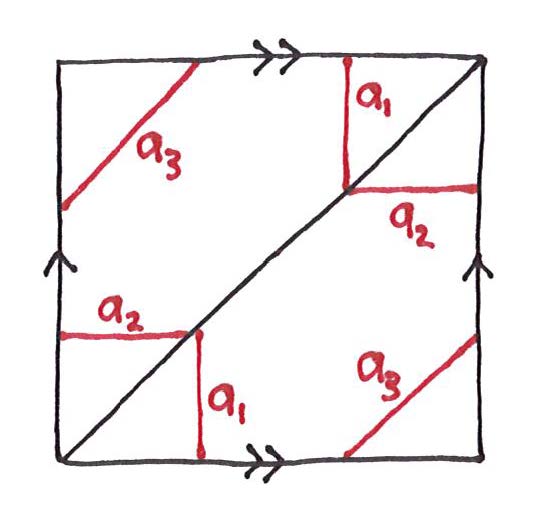}
    \caption[Normal arc types in the boundary triangulation]{Normal arc types in the boundary triangulation.}
    \label{archetype}
\end{figure}

Fix an ordering of the disk types in $\mathcal{T}$ such that $d_1,\ldots,d_7$ represent the disk types in one of the tetrahedra which meets $\partial M$ in a face $\phi$. Furthermore, let $d_1,\ldots,d_4$ represent triangular disk types, and $d_5,d_6,d_7$ represent quadrilateral disks, such that $d_i$ and $d_{i+4}$ meet $\phi$ in the same arc type $a_i$ for $i=1,2,3$. Here $d_4$ is the triangular disk type which is disjoint from $\phi$.

Let $y_i$ be the number of arcs of type $a_i$ in $\phi$. It follows that 
\[y_i=x_i+x_{i+4}\] for $i=1,2,3$. Jaco and Segwick~\cite{jacosed} showed that $y_1,y_2,y_3$ and the matching equations for normal curves determine the number of arcs of each type in the other $2$-simplex of $\mathcal{T}_{\partial}$. We define the boundary solution space of $\partial M$ to be 
\[\mathcal{S}_{\mathcal{T}_{\partial}}=\{(y_1,y_2,y_3)|y_i\in\mathbb{N}_0\}\subset\mathbb{R}^3,\]
where $\mathbb{N}_0=\mathbb{N}\cup\{0\}$. If $\partial S$ is the boundary of a properly embedded normal surface, then $\partial S$ is represented by $\mathbf{n}(\partial S)=(y_1,y_2,y_3)$ in $\mathcal{S}_{\mathcal{T}_{\partial}}$.

If each coordinate of $\mathbf{n}(\partial S)$ is non-zero, then $\partial S$ contains a trivial curve. Hence if $S$ is an incompressible surface, then at least one of the coordinates of $\mathbf{n}(\partial S)$ is zero.

Jaco and Sedgwick~\cite{jacosed} proved that if $S$ is a properly embedded $\pi_1$-essential normal surface with boundary in a compact $3$-manifold $M$ with triangulation $\mathcal{T}$, then every surface in $\mathcal{C}_{\mathcal{T}}(S)$ is either closed, or has the same slope as $S$. This means that if $\frac{p}{q}$ is a boundary slope of $X$, then there is a vertex surface $S$ which has slope $\frac{p}{q}$. Hence it is only necessary to check the vertices of $\mathcal{P}_{\mathcal{T}}$ in order to list all boundary slopes of $X$. In proving this theorem, Jaco and Sedgwick have given another proof of a theorem of Hatcher~\cite{hatbc} that there are only a finite number of slopes bounding $\pi_1$-essential surfaces in any knot exterior. Recall that the set of slopes of a link exterior which bound $\pi_1$-essential surfaces is not necessarily finite, so our algorithm is only designed to work for knots.

Jaco and Sedgwick~\cite{jacosed} also gave an algorithm to decide if a knot manifold $M$ is a knot exterior in $S^3$, which is also an algorithm to find the unique meridian $\mu$ of a knot exterior $X$. This algorithm makes use of the Rubinstein-Thompson algorithm~\cite{rub3sph, thomp3} which can decide if a $3$-manifold is homeomorphic to the $3$-sphere.  There is also an algorithm which can decide if $X$ is a solid torus~\cite{haknf}, which is equivalent to deciding if $K$ is the unknot, and it is now known that some spanning disk for $K$ can be found as a vertex solution if $K$ is the unknot.

The boundary triangulation $\mathcal{T}_{\partial}$ consists of two $2$-simplices and three edges. We can modify the triangulation $\mathcal{T}$ by gluing two faces of a tetrahedron $\triangle$ to $\mathcal{T}_{\partial}$. The resulting triangulation $\mathcal{T}'=\mathcal{T}\cup\triangle$ is another one vertex triangulation of $M$, and $\mathcal{T}'$ is called a layered triangulation. In effect, this is a $(2,2)$-Pachner move on $\mathcal{T}_{\partial}$. The other two faces of $\triangle$ form the boundary triangulation $\mathcal{T}'_{\partial}$.

Layering a tetrahedron changes the slope of one of the edges in the boundary triangulation. It is always possible to layer a triangulation with a sequence of tetrahedra so that the edges of $\mathcal{T}'_{\partial}$ have slopes $\infty, k, k+1$, for some $k\in\mathbb{Z}$. We will choose to do this so that $(1,0,0)\in\mathcal{S}_{\mathcal{T}_{\partial}}$ represents the meridian $\mu$.

\subsection{Alternating Algorithm}\label{altalg}

We now describe an algorithm to decide if a knot is alternating on $S^2$. The input is a triangulation $\mathcal{T}$ of a knot exterior $X$. If instead we are given a non-alternating planar diagram $\pi(K)$, then there is a method of Petronio~\cite{petroid} to construct a spine of the knot complement $S^3\setminus K$ from the diagram $\pi(K)$. Dual to this spine is an ideal triangulation of $S^3\setminus K$. We can then use an inflation of Jaco and Rubinstein~\cite{jrinfla} to construct a one-vertex triangulation of the knot exterior $X$ from the ideal triangulation.

First we need one more result on spanning surfaces.

\begin{lemma}\label{spandis}
Suppose that $\Sigma$ and $\Sigma'$ are spanning surfaces for a knot $K\subset S^3$. If
\[\chi(\Sigma)+\chi(\Sigma')+\frac{1}{2}i(\partial\Sigma,\partial\Sigma')>2,\]
then at least one of $\Sigma$ or $\Sigma'$ is not connected.
\end{lemma}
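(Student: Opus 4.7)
The plan is to argue by contradiction, reusing the $2$-complex and immersion construction from the proof of Theorem~\ref{chars3}. Suppose both $\Sigma$ and $\Sigma'$ are connected spanning surfaces for $K$ with
\[\chi(\Sigma)+\chi(\Sigma')+\tfrac{1}{2}i(\partial\Sigma,\partial\Sigma')>2.\]
A connected spanning surface for a knot has a single boundary component and hence Euler characteristic at most $1$, so $\chi(\Sigma)+\chi(\Sigma')\leq 2$. Combined with the hypothesis this forces $i(\partial\Sigma,\partial\Sigma')>0$.

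Next, I would isotope $\Sigma$ and $\Sigma'$ so that their boundaries realise the minimal geometric intersection number $i(\partial\Sigma,\partial\Sigma')$ on $\partial X$, and so that the interiors are in general position. Because $K$ is a knot, the hypothesis of Lemma~\ref{standarc} is automatic and every double arc of intersection is standard. Form the $2$-complex $F'_0=\overline{\Sigma}\cup\overline{\Sigma}'\subset S^3$. The same cut-and-count along the trivalent graph $\Gamma'$ as in Theorem~\ref{chars3} yields
\[\chi(F'_0)=\chi(\Sigma)+\chi(\Sigma')+\tfrac{1}{2}i(\partial\Sigma,\partial\Sigma')>2.\]

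I then collapse every standard arc of $\overline{\mathcal{A}}$ to a point, obtaining the immersed surface $F_0\subset S^3$ whose only self-intersections are the loops in $\mathcal{L}_0$, and let $f_0:S_0\looparrowright S^3$ be the corresponding immersion. Exactly as in Theorem~\ref{chars3}, $\chi(S_0)=\chi(F_0)=\chi(F'_0)>2$. Since no closed connected surface has Euler characteristic greater than $2$, the contradiction will follow as soon as $S_0$ is known to be connected.

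The main obstacle is precisely this connectedness step. Here I would use the same bookkeeping as in Theorem~\ref{chars3}: since $\Sigma$ and $\Sigma'$ are both connected and $i(\partial\Sigma,\partial\Sigma')>0$ guarantees that at least one standard arc exists, collapsing such an arc identifies the two bubble disks $\overline{\Sigma}\cap W$ and $\overline{\Sigma}'\cap W$ into a single disk in $F_0$, which in turn joins the two sheets in the abstract surface $S_0$. Thus $S_0$ is a connected closed surface with $\chi(S_0)>2$, the desired contradiction. Once this connectedness point is in place, the lemma is essentially a strict-inequality strengthening of the equality case already handled in Theorem~\ref{chars3}.
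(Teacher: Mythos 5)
Your proof is correct and follows essentially the same route as the paper: form $F'=\overline{\Sigma}\cup\overline{\Sigma}'$, collapse the standard arcs to get an immersed closed surface with $\chi>2$, and conclude from the fact that no closed connected surface has Euler characteristic exceeding $2$. You phrase it contrapositively and spell out why $S_0$ is connected when both $\Sigma$ and $\Sigma'$ are (a point the paper's proof leaves implicit), but the underlying argument is identical.
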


\begin{proof}
Isotope $\Sigma$ and $\Sigma'$ so that they are in general position and realise $i(\partial\Sigma,\partial\Sigma')$ on $\partial X$. As in the proof for Theorem~\ref{chars3}, we form the pseudo $2$-complex $F'=\overline{\Sigma}\cup\overline{\Sigma}'$ and collapse the arcs of intersection to points to obtain an immersed surface $F$, where $\chi(F)=\chi(\Sigma)+\chi(\Sigma')+\frac{1}{2}i(\partial\Sigma,\partial\Sigma')>2$. Let $f:S\looparrowright S^3$ be an immersion of a closed surface $S$ such that $f(S)=F$. The only possible self-intersections of $f(S)$ are loops, so $\chi(S)=\chi(F)>2$, and therefore $S$ is not connected. Hence either $\Sigma$ or $\Sigma'$ is not connected.
\end{proof}

\begin{theorem}\label{algdec}
Let $X$ be the exterior of a knot $K\subset S^3$. Given $X$, there is an algorithm to decide if $K$ is prime and alternating.
\end{theorem}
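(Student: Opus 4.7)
My strategy is to combine Theorem~\ref{chars3} and Lemma~\ref{spandis} with normal-surface enumeration, and then handle primeness via a separate essential-annulus search. I first preprocess $X$: run the Jaco--Sedgwick algorithm to verify that $X$ is a knot exterior in $S^3$ and to identify its meridian $\mu$, and run Haken's solid-torus algorithm to return \emph{no} in the unknot case. As in Section~\ref{normsurf}, I then fix a $0$-efficient one-vertex triangulation of $X$ and layer boundary tetrahedra until $\mu$ is represented by $(1,0,0)\in\mathcal{S}_{\mathcal{T}_{\partial}}$; in these coordinates the spanning surfaces are precisely the properly embedded normal surfaces whose boundary vector $(y_1,y_2,y_3)$ has geometric intersection one with $(1,0,0)$, a condition readable from the normal coordinates.

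Next I reduce the alternating question to a finite search. By Hatcher's theorem (as re-proved via Jaco--Sedgwick), only finitely many slopes on $\partial X$ bound a $\pi_1$-essential surface, and each such slope is realised at a vertex of $\mathcal{P}_{\mathcal{T}}$; enumerating vertices produces a finite list $\mathcal{S}$ of integer spanning-surface slopes. For each $p\in\mathcal{S}$ I compute
\[\chi_p=\max\bigl\{\chi(\Sigma):\Sigma\text{ a connected }\pi_1\text{-essential spanning surface of slope }p\bigr\},\]
using Jaco--Tollefson maximisation over fundamental solutions in the face of $\mathcal{P}_{\mathcal{T}}$ carrying slope $p$, with connectedness checked combinatorially from the normal coordinates. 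I then claim that $K$ is alternating if and only if some pair $(p,p')\in\mathcal{S}\times\mathcal{S}$ satisfies $\chi_p+\chi_{p'}+\tfrac{1}{2}|p-p'|=2$. The reverse direction is immediate from Theorem~\ref{chars3}, while the forward direction uses that the checkerboard surfaces of a reduced alternating diagram are connected $\pi_1$-essential spanning surfaces (by Aumann/Ozawa), combined with the upper bound $\chi_p+\chi_{p'}+\tfrac{1}{2}|p-p'|\leq 2$ from Lemma~\ref{spandis}.

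Having identified whether $K$ is alternating, primeness reduces to deciding whether $X$ contains an essential meridional annulus: a prime alternating knot is non-satellite by Menasco, so $K$ is prime alternating if and only if it is alternating and has no such annulus. An essential annulus has $\chi=0$ and by Jaco--Tollefson exists at meridional slope if and only if one occurs as a vertex solution of $\mathcal{P}_{\mathcal{T}}$ with boundary a multiple of $(1,0,0)$, so searching the finite vertex list for an essential annulus completes the algorithm. The main obstacle is the computation of $\chi_p$: I need to show that the maximising connected $\pi_1$-essential spanning surface at each slope is algorithmically accessible, in particular that it occurs either as a vertex surface or as a detectable Haken sum of fundamental surfaces within a single face of $\mathcal{P}_{\mathcal{T}}$ while remaining connected. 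Ensuring connectedness survives Haken summation is the most delicate point.
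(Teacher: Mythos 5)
Your preprocessing (Jaco--Sedgwick meridian detection, solid-torus check, layering so that $(1,0,0)$ represents $\mu$) and your overall strategy of combining Theorem~\ref{chars3} with Lemma~\ref{spandis} and a normal-surface search match the paper. But the step you flag as ``the main obstacle'' --- computing $\chi_p$, the maximal Euler characteristic of a connected $\pi_1$-essential spanning surface at each slope $p$ --- is not a loose end; it is the entire content of the proof, and your proposal does not fill it. There is no off-the-shelf Jaco--Tollefson result that maximises Euler characteristic over \emph{connected, $\pi_1$-essential} spanning surfaces at a fixed slope: the maximiser need not be a vertex or fundamental surface a priori, certifying essentiality adds a further layer of algorithmic difficulty, and (as you note) connectedness is not preserved under Haken sum. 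Moreover, restricting to $\pi_1$-essential surfaces is both unnecessary (Theorem~\ref{chars3} only requires connected spanning surfaces) and counterproductive, since it is exactly what makes your maximisation hard to justify; and your set $\mathcal{S}$ of essential-surface slopes may contain slopes at which no connected $\pi_1$-essential \emph{spanning} surface exists, leaving $\chi_p$ undefined.

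The paper closes this gap by a different reduction, which you should compare with your own. First it checks the vertices of $\mathcal{P}_{\mathcal{T}}$ for incompressible tori: a non-boundary-parallel one means $K$ is a satellite, hence not prime and alternating, so the algorithm may assume $X$ is atoroidal. (This torus check does double duty: it disposes of primeness --- a composite knot is a satellite --- so no separate meridional-annulus search is needed, and it is also an essential hypothesis in what follows. Your argument never establishes atoroidality, which is another reason your $\chi_p$ computation cannot be pushed through via fundamental summands: without it, closed summands of Euler characteristic $0$ cannot be excluded.) The paper then proves that \emph{any} pair of connected normal spanning surfaces $\Sigma,\Sigma'$ satisfying $(\star)$ must both be \emph{fundamental}: if $\mathbf{n}(\Sigma)$ were not fundamental, the Jaco--Sedgwick slope theorem forces its Haken decomposition to contain exactly one summand with boundary, a fundamental spanning surface $\Sigma_1$ at the same slope, and irreducibility plus atoroidality force every closed summand to have $\chi\leq -2$; hence $\chi(\Sigma_1)>\chi(\Sigma)$, so $\Sigma_1$ and $\Sigma'$ violate Lemma~\ref{spandis} unless one is disconnected, and iterating yields a contradiction with the connectedness of fundamental surfaces. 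This reduces the whole problem to a finite, explicitly computable check over pairs of fundamental spanning surfaces, with no essentiality certification and no per-slope maximisation. You should replace your $\chi_p$ construction with this argument (or an equivalent one); as written, your proposal reduces the theorem to an unproved algorithmic claim.
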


\begin{proof}
Let $\mathcal{T}'$ be a one-vertex triangulation of $X$. As shown in~\cite{jacosed}, any other triangulation of $X$ can be modified to a one-vertex triangulation.

Use the Jaco-Sedgwick algorithm to find the unique meridian $\mu$ of $X$. Included in this process is a check whether $X$ is a solid torus. If $X$ is a solid torus, then $K$ is the unknot which by our convention, is not alternating. 

The theorem of Jaco and Tollefson~\cite{jacotol} tells us that if $X$ contains an incompressible torus $T$, then $\hat{\mathbf{n}}(T)$ must be a vertex of $\mathcal{P}_{\mathcal{T}'}$.
Check whether any of the vertices of $\mathcal{P}_{\mathcal{T}'}$ are tori. If any such tori are not boundary parallel, then $K$ is a satellite knot, but a theorem of Menasco~\cite{menasc} tells us that a prime alternating knot cannot be a satellite knot. Thus we can assume that the only incompressible torus is boundary parallel. It can be decided if a torus $T$ is boundary parallel by cutting $X$ along $T$ and testing whether one of the components is homeomorphic to $T\times I$.

Layer the triangulation until one of the edges in the boundary is parallel to $\mu$. Then the other edges in the boundary are parallel to $\lambda+k\mu$ and $\lambda+(k+1)\mu$ for some $k\in\mathbb{Z}$. Call this triangulation $\mathcal{T}$.

Let $\triangle$ be one of the two tetrahedra that meets the boundary and let $\phi$ be a face of $\triangle$ which lies in the boundary. Let $(x_1,x_2,x_3,x_4,x_5,x_6,x_7)$ describe the normal coordinates of $S\cap\triangle$ where $S$ is a properly embedded surface with boundary in $X$. Let $(y_1,y_2,y_3)$ describe the normal coordinates of $\partial S\cap\phi$. As described in Section~\ref{normsurf}, we label the arc and disc types such that $y_i = x_i+x_{i+4}$ for each $i = 1,2,3$, and such that $(1,0,0)$ represents $\mu$ in $\partial X$. Let $(0,1,0)$ represent $\lambda+k\mu$ so that $(0,0,1)$ represents $\lambda+(k+1)\mu$ for some $k\in\mathbb{Z}$.

\begin{figure}[htb!]
    \centering
    \includegraphics[scale=0.45, trim = 0mm 12mm 0mm 7mm, clip]{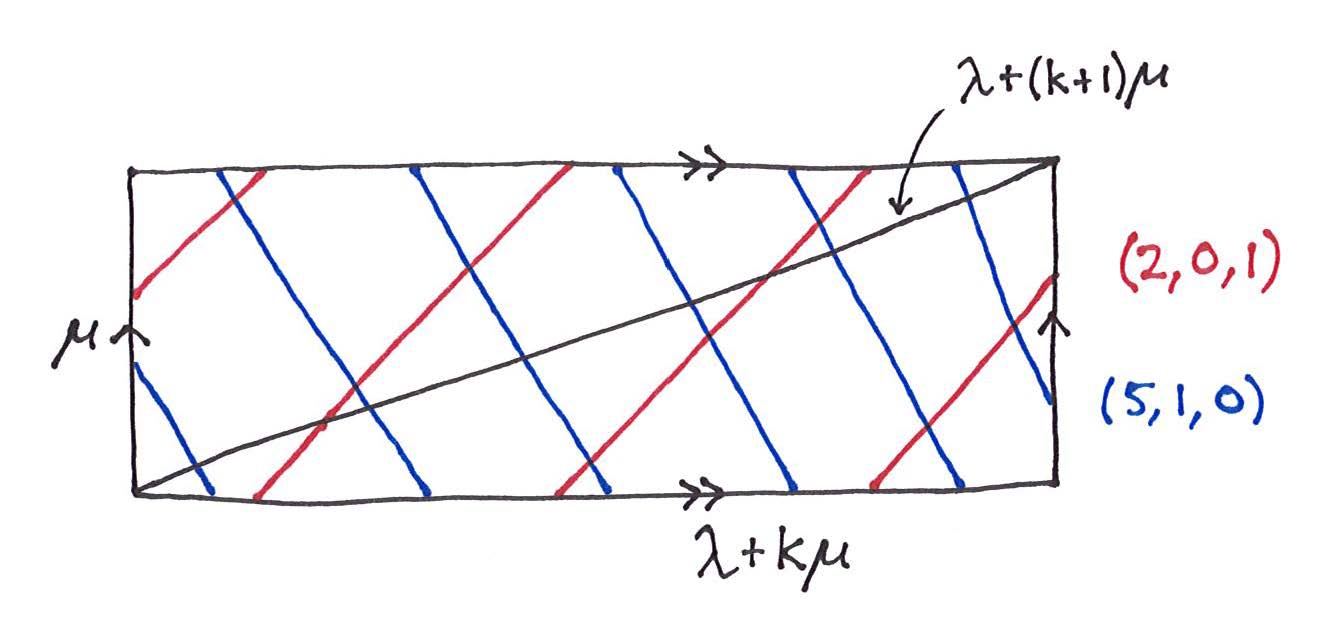}
    \caption[Normal curves in the boundary triangulation after layering]{Normal curves in the boundary triangulation after layering.}
    \label{bdintcnt}
\end{figure}

Any spanning surface for $X$ meets $\mu$ exactly once. It follows that if $(y_1,y_2,y_3)$ represents a spanning surface, then $y_2+y_3=1$. So there are two types of coordinates in $\mathcal{S}_{\mathcal{T}_{\partial}}$ which can represent spanning surfaces: $(y,1,0)$ and $(y,0,1)$ for some $y\in\mathbb{N}_0$.

Let $\Sigma$ and $\Sigma'$ be normal spanning surfaces in $X$. We can read off the minimal geometric intersection number of their boundaries from their coordinates in $\mathcal{S}_{\mathcal{T}_{\partial}}$. Let $y,y'\in\mathbb{N}_0$. There are three cases:
\begin{enumerate}
\item If $\partial\Sigma$ and $\partial\Sigma'$ are represented by $(y,1,0)$ and $(y',1,0)$ respectively, then $i(\partial\Sigma,\partial\Sigma')=|y-y'|$.
\item If $\partial\Sigma$ and $\partial\Sigma'$ are represented by $(y,0,1)$ and $(y',0,1)$ respectively, then $i(\partial\Sigma,\partial\Sigma')=|y-y'|$.
\item If $\partial\Sigma$ and $\partial\Sigma'$ are represented by $(y,1,0)$ and $(y',0,1)$ respectively, then $i(\partial\Sigma,\partial\Sigma')=y+y'+1$. See Figure~\ref{bdintcnt} for an example of this case.
\end{enumerate} 

Note that we could continue layering the triangulation until $k=0$, which would require detection of a Seifert surface, but this is not necessary since we are only interested in the differences of spanning slopes, and not the boundary slopes themselves.

Theorem~\ref{chars3} tells us that we need to find a pair of connected spanning surfaces at even boundary slope, which satisfy
\[\chi(\Sigma)+\chi(\Sigma')+\frac{1}{2}i(\partial\Sigma,\partial\Sigma')=2,\tag{$\star$}\] 
The checkerboard surfaces $\Sigma$ and $\Sigma'$ associated to a reduced alternating diagram of $K$ are one such air of surfaces. Aumann~\cite{aumasph} showed that they are both $\pi_1$-essential in $X$, so we know that both $\Sigma$ and $\Sigma'$ must have normal representatives in their isotopy classes.

Let $\Sigma$ and $\Sigma'$ be a pair of connected normal spanning surfaces which satisfy~\eqref{eulereqn}. Suppose that $\mathbf{n}(\Sigma)$ is not a fundamental solution. Then 
\[\Sigma = \Sigma_1\oplus\ldots\oplus\Sigma_a\oplus S_1\oplus\ldots\oplus S_b,\]
where each $\Sigma_i$ is a properly embedded compact surface with boundary, and each $S_j$ is a properly embedded closed surface. 

The theorem of Jaco and Sedgwick~\cite{jacosed} tells us that each $\Sigma_i$ must have the same slope as $\Sigma$. Since $\Sigma$ is a spanning surface, then $a$ must equal one, and $\Sigma_1$ is also a spanning surface at the same slope as $\Sigma$. In fact, $\Sigma_1$ must be fundamental. 

Since $X$ is irreducible, atoroidal and embedded in $S^3$, it follows that $\chi(S_j)\leq-2$ for each $j=1,\dots,b$. Here we note that if $\Sigma$ is a normal spanning surface for $X$, and $T$ is a boundary-parallel torus, then $\Sigma\oplus T$ is isotopic in $X$ to $\Sigma$, so Haken sum with $T$ can be ignored. 

But then $\chi(\Sigma)<\chi(\Sigma_1)$ which implies that
\[\chi(\Sigma_1)+\chi(\Sigma')+\frac{1}{2}i(\partial\Sigma_1,\partial\Sigma')>2.\]
Hence, by Lemma~\ref{spandis}, at least one of $\Sigma_1$ or $\Sigma'$ must be disconnected. Every fundamental surface is connected, so $\Sigma'$ must be disconnected. 

In that case, 
\[\Sigma' = \Sigma'_1\oplus S'_1\oplus\ldots\oplus S'_c,\]
where $\Sigma'_1$ is fundamental and each $S'_i$ is a closed embedded normal surface. Repeating the previous argument shows that one of $\Sigma_1$ or $\Sigma'_1$ is not connected, contradicting that they are both fundamental. Therefore $\Sigma$ and $\Sigma'$ are fundamental surfaces. 

Let $\mathcal{F}$ be the set of all fundamental spanning surfaces in $X$. For each pair of surfaces $\Sigma,\Sigma'\in\mathcal{F}$, calculate the intersection number $i(\partial\Sigma,\partial\Sigma')$, and calculate $\chi(\Sigma)$ and $\chi(\Sigma')$. There is an algorithm to compute the Euler characteristic of a properly embedded normal surface described in~\cite{jacotol}. If $\Sigma$ and $\Sigma'$ satisfy equation~\eqref{eulereqn}, then $K$ is alternating by Theorem~\ref{chars3}. Since we know $K$ is not a satellite knot, $K$ is prime. If no pair of surfaces from $\mathcal{F}$ satisfy equation~\eqref{eulereqn}, then $K$ is not alternating.
\end{proof}

Let $\pi(K)$ be an alternating diagram of the prime knot $K$ with associated checkerboard surfaces $\Sigma$ and $\Sigma'$. Let $\pi_*(K)$ be a different alternating diagram of $K$ with associated checkerboard surfaces $\Sigma_*$ and $\Sigma'_*$. If $\pi(K)$ and $\pi_*(K)$ are both reduced, then we know from a theorem of Menasco and Thistlethwaite~\cite{menthis}, that $\pi(K)$ and $\pi_*(K)$ are related by a sequence of flypes. In that case, $\Sigma$ and $\Sigma_*$ are homeomorphic and have the same boundary slope, but $\Sigma$ and $\Sigma_*$ may not be isotopic in $X$. The same is true for $\Sigma'$ and $\Sigma'_*$.

However, every checkerboard surface for a reduced alternating diagram is $\pi_1$-essential, and thus will appear amongst our collection of fundamental spanning surfaces $\mathcal{F}$. The collection $\mathcal{F}$ may also contain some pairs of surfaces which correspond to an alternating diagram which is not reduced. In this case, at least one of the checkerboard surfaces fails to be $\pi_1$-essential.

Let $\Sigma$ and $\Sigma'$ have minimal intersection number amongst all surfaces from $\mathcal{F}$ which satisfy~\eqref{eulereqn}. Place an orientation on $\partial\Sigma$, and label the vertices of $\partial\Sigma\cap\partial\Sigma'$ in the order they are encountered as one traverses $\partial\Sigma$ by $1,\ldots,i$ where $i=i(\partial\Sigma,\partial\Sigma')$. Then each arc of intersection between $\Sigma$ and $\Sigma'$ is labelled by two numbers, one even and one odd. These pairs of numbers, listed as a sequence of even positive integers in the order of their paired odd numbers, correspond to the Dowker-Thistlethwaite notation~\cite{dowthis} of a planar alternating diagram of $K$ or its mirror image. 

Therefore, given a non-alternating planar diagram of a knot $K$, there is an algorithm to decide if $K$ is prime and alternating, and if so, there is an algorithm to produce an alternating diagram of $K$ up to chirality.

\bibliography{charalt}
\bibliographystyle{plain}

\noindent \url{j.howie@student.unimelb.edu.au} 

\noindent School of Mathematics and Statistics

\noindent University of Melbourne 

\noindent VIC 3010

\noindent Australia

\end{document}